\documentclass[reqno]{amsart}
\usepackage[psamsfonts]{amssymb}
\usepackage{amsthm}
\usepackage{color}

\newtheorem{theorem}{Theorem}
\newtheorem{lemma}{Lemma}

\theoremstyle{remark}

\theoremstyle{definition}

 \theoremstyle{definition}
 \newtheorem{defn}{Definition}
 \theoremstyle{remark}

 \numberwithin{equation}{section}

\usepackage{graphicx}%
\usepackage{multirow}%
\usepackage{amsmath,amssymb,amsfonts}%
\usepackage{amsthm}%
\usepackage{mathrsfs}%
\usepackage[title]{appendix}%
\usepackage{xcolor}%
\usepackage{textcomp}%
\usepackage{manyfoot}%
\usepackage{algorithm}%
\usepackage{algorithmicx}%
\usepackage{algpseudocode}%
\usepackage{listings}%

 \usepackage{graphicx,amssymb,upref}
\usepackage{graphicx}
\usepackage{float}
\usepackage{enumerate}
\usepackage{hyperref}

\def\bege{\begin{equation}} \def\ende{\end{equation}}

   \def\begr{\begin{eqnarray}}
\def\endr{\end{eqnarray}} 
 
\def\bege{\begin{equation}} \def\ende{\end{equation}}
\def\begr{\begin{eqnarray}} \def\endr{\end{eqnarray}}
\def\bnum{\begin{enumerate}} \def\enum{\end{enumerate}}

\allowdisplaybreaks[4]

\begin{document}
\title[Complex symmetric operator ]{Complex symmetric generalized product type operators on weighted Bergman space}
\thanks{The first author is thankful to  DST(India) for WISE-PDF  (File no. DST/WISE-PDF/PM-46/2023.}
\keywords {Weighted composition-differentiation operators; Berg  spaces; Normal operator; Self-adjoint operator; Complex symmetric operator}
\subjclass[2010]{47B33, 47B38, 46E10, 32A37.}

\author[A. Sharma]{Aakriti
 Sharma}

\address{
Department of   Mathematics, University of Jammu-180006, J\&{K},
 India.}

\email{aakritishma321@gmail.com}

\begin{abstract}
   In this paper, inspired by \cite{HS1}, we provide a characterization of complex symmetric, self-adjoint, and normal generalized product-type operators on weighted Bergman space, with respect to conjugation $JW_{u_ {\alpha}, \vartheta_ {\alpha}}$.
 \end{abstract}

\maketitle

\section{Introduction}\label{sec:intr}
Let $\mathcal{H}(\mathbb{D})$ represent the set of all analytic functions defined on the unit disk $\mathbb{D} = \{ z \in \mathbb{C} : |z| \leq 1 \}$, and let $\mathfrak{S}(\mathbb{D})$ denote the set of all holomorphic self-maps of $\mathbb{D}$. For any parameter $\beta$ satisfying $-1 < \beta \le \infty$, the weighted Bergman space, denoted by $\mathcal{A}^{2}_{\beta}$, consists of all functions $f$ in $\mathcal{H}(\mathbb{D})$ such that
\begin{equation*}
  \|f\|^{2}_{ \mathcal{A}^{2}_{\beta}}=\int_{\mathbb{D}}|f(z)|^{2}d\mathcal{A}_{\beta}(z)<\infty  
\end{equation*}

where $d\mathcal{A}_{\beta}(z)$ is an associated weighted area measure on the unit disk.

%Throughout the paper, let $\mathcal{H}(\mathbb{D})$ be the space of all analytic functions on the unit disk $\mathbb{D}=\{z\in \mathbb{C}:|z|\le 1\}$ and $\mathfrak{S}(\mathbb{D})$ be the space of all self maps in $\mathbb{D}$. For $-1 < \alpha \le \infty$, the Bergman space is denoted by $\mathcal{A}^{2}_{\alpha}$ and is defined as 
%\begin{equation}
  %  \mathcal{A}^{2}_{\alpha}=\Bigg\{f: f\in \mathcal{H}(\mathbb{D}) \text{ and } \|f\|^{2}_{ \mathcal{A}^{2}_{\alpha}}=\int_{\mathbb{D}}|f(z)|^{2}d\mathcal{A}_{\alpha}(z)<\infty\Bigg\}
%\end{equation}
%Here $d\mathcal{A}_{\alpha}(z)=(\alpha +1) (1-|z|^{2})^{\alpha}d\mathcal{A}(z)$ is the weighted area measure with standard weights $(\alpha +1) (1-|z|^{2})^{\alpha}, \;\; \alpha > -1 $ on $\mathbb{D}$.

The weighted Bergman space $\mathcal{A}^{2}_{\beta}$, for suitable values of $\beta$, is a well-known example of a reproducing kernel Hilbert space (RKHS). The inner product on this space is defined by
\[
\langle f, g \rangle = \int_{\mathbb{D}} f(z)\, \overline{g(z)}\, d\mathcal{A}_{\beta}(z),
\]
for all functions $f, g \in \mathcal{A}^{2}_{\beta}$. The reproducing kernel at a point $\xi \in \mathbb{D}$ is given by
\[
K_{\xi}(z) = \frac{1}{(1 - \overline{\xi}z)^{\beta + 2}}.
\]
This reproducing property ensures that for any $f \in \mathcal{A}^{2}_{\beta}$ and $\xi \in \mathbb{D}$,
\[
\langle f, K_{\xi} \rangle = f(\xi), \quad \text{and} \quad \langle f, K_{\xi}^{(n)} \rangle = f^{(n)}(\xi),
\]
where $K_{\xi}^{(n)}$ denotes the reproducing kernel associated with the $n^{\text{th}}$ derivative at the point $\xi$, and is explicitly defined by
\[
K_{\xi}^{(n)}(z) = \frac{z^n \prod_{i=2}^{n+1} (\beta + i)}{(1 - \overline{\xi}z)^{\beta + n + 2}}.
\]

For more details on Bergman spaces and their properties, see \cite{CM, HK}.

Let $\vartheta \in \mathfrak{S}(\mathbb{D})$ be an analytic self-map of the unit disk and $u \in \mathcal{H}(\mathbb{D})$ be an analytic function. The \emph{weighted composition differentiation operator} of order $n$ is defined by
\[
W^{n}_{u,\vartheta}(f)= W_{u,\vartheta}\mathfrak{D}^{n}(f)= u \cdot \left(f^{(n)} \circ \vartheta\right),
\]
where $\mathfrak{D}^{n}(f) = f^{(n)}$ denotes the $n^{\text{th}}$ derivative of $f$, and $W_{u,\vartheta}(f) = u \cdot (f \circ \vartheta)$ is the standard weighted composition operator. It should be noted that the differentiation operator $\mathfrak{D}$ is unbounded.

We now recall the concept of complex symmetric operators, first introduced by Garcia and Putinar \cite{GP1, GP2}.

\begin{defn}
Let $\mathcal{H}$ be a complex Hilbert space. A map $\Upsilon : \mathcal{H} \to \mathcal{H}$ is called a \emph{conjugation} if it satisfies the following properties:
\begin{enumerate}
    \item \textbf{Conjugate-linearity:} For all $\xi_1, \xi_2 \in \mathcal{H}$ and $\lambda_{1}, \lambda_{2} \in \mathbb{C}$,
    $
    \Upsilon(\lambda_{1} \xi_1 + \lambda_{2} \xi_2) = \overline{\lambda_{1}} \Upsilon(\xi_1) + \overline{\lambda_{2}} \Upsilon(\xi_2).
    $
    \item \textbf{Involution:} $\Upsilon^2 = I$, where $I$ is the identity operator.
    \item \textbf{Isometry:} $\|\Upsilon \xi\| = \|\xi\|$ for all $\xi \in \mathcal{H}$.
\end{enumerate}
\end{defn}

\begin{defn}
Let $L$ be a bounded linear operator on $\mathcal{H}$, and let $L^*$ denote its adjoint. Then:
\begin{enumerate}
    \item $L$ is called \emph{$\Upsilon$-complex symmetric} if $L = \Upsilon L^* \Upsilon$.
    \item $L$ is \emph{self-adjoint} if $L = L^*$.
    \item $L$ is \emph{normal} if $LL^* = L^*L$, or equivalently, $\|L\| = \|L^*\|$.
\end{enumerate}
\end{defn}

In recent research, the study of product type operators on spaces of analytic functions has attracted growing interest. For further developments, refer to \cite{ AK, FH1, FH2, LK} and references therein.

In this paper, we explore a new conjugation via weighted composition operators introduced in~\cite{JH}.

\textbf{Underlying Idea:} Given two conjugations $\Upsilon_1$ and $\Upsilon_2$, their composition $\Upsilon_1 \Upsilon_2$ results in a unitary operator. This observation implies that any two conjugations are related by a unitary transformation. Specifically, one can write $\Upsilon_2 = \Upsilon_1 (\Upsilon_1 \Upsilon_2)$. Motivated by this, we construct a new conjugation on the weighted Bergman space $\mathcal{A}^{2}_{\beta}$ by composing the standard conjugation $J$ with a unitary weighted composition operator $W_{u_\alpha, \vartheta_\alpha}$. The resulting operator $J W_{u_\alpha, \vartheta_\alpha}$ defines a conjugation, as shown in Lemma 2 of~\cite{JH}.

This new conjugation is given by:
$
(J W_{u_\alpha, \vartheta_\alpha})(f)(z) = \overline{\kappa \, \frac{\alpha - \overline{z}}{1 - \overline{\alpha} z} \, f\left( \frac{(1 - |\alpha|^2)^{\frac{\beta + 2}{2}}}{(1 - \overline{\alpha} z)^{\beta + 2}} \right)},
$
where the standard conjugation $J$ acts on analytic functions by
$
J(f(z)) = \overline{f(\overline{z})}.
$

The operator $W_{u_\alpha, \vartheta_\alpha}$ is a unitary weighted composition operator on $\mathcal{A}^{2}_{\beta}$, as discussed in~\cite{ZO}, and is defined by:
$
W_{u_\alpha, \vartheta_\alpha}(f)(z) = \kappa \, \frac {\alpha - z}{1 - \overline{\alpha} z} \, f\left( \frac{(1 - |\alpha|^2)^{\frac{\beta + 2}{2}}}{(1 - \overline{\alpha} z)^{\beta + 2}} \right),
$
where the weight and symbol are given respectively by:
$
 u_\alpha(z) = \kappa \, \frac {\alpha - z}{1 - \overline{\alpha} z}, \quad  \vartheta_\alpha(z) = \frac{(1 - |\alpha|^2)^{\frac{\beta + 2}{2}}}{(1 - \overline{\alpha} z)^{\beta + 2}},
$
with $\alpha \in\mathbb{D}$ and $|\kappa| = 1$.

\section{Complex symmetric and self-adjoint generalized product type operators}\label{sec:prelimi}
In this section, we characterize complex symmetric weighted composition-differentiation operators on weighted Bergman spaces with respect to the conjugation $JW_{u_b,\varphi_b}$. We then examine the self-adjoint weighted composition-differentiation operators on weighted Bergman spaces

%\begin{lemma}\cite{AH} Let  $W^{n}_{u,\vartheta}$ is bounded on $\mathcal{A}^{2}_{\alpha}.$ Then 
%\begin{equation}
   % (W^{n}_{u,\vartheta})^{*}K_{\xi}(z)= \overline{u(\xi)} K^{(n)}_{\vartheta(\xi)}(z)
%\end{equation}
  %for any $\vartheta \in \mathfrak{S}(\mathbb{D})$ and $u\in \mathcal{H}(\mathbb{D})$. 
%\end{lemma}

\begin{lemma}\cite{AH}
Suppose the operator $W^{n}_{u,\vartheta}$ is bounded on the weighted Bergman space $\mathcal{A}^{2}_{\beta}$. Then, for any $\xi \in \mathbb{D}$, the adjoint operator satisfies:

$$(W^{n}_{u,\vartheta})^{*} K_{\xi}(z) = \overline{u(\xi)} \, K^{(n)}_{\vartheta(\xi)}(z),
$$
where $\vartheta \in \mathfrak{S}(\mathbb{D})$ is an analytic self-map of the unit disk, and $u \in \mathcal{H}(\mathbb{D})$ is analytic on $\mathbb{D}$.
\end{lemma}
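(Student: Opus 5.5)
The plan is to identify $(W^{n}_{u,\vartheta})^{*}K_{\xi}$ by testing it against an arbitrary $f\in\mathcal{A}^{2}_{\beta}$ and then using the reproducing properties recalled in the introduction. Since $W^{n}_{u,\vartheta}$ is assumed bounded, its adjoint exists as a bounded operator on $\mathcal{A}^{2}_{\beta}$. Hence $(W^{n}_{u,\vartheta})^{*}K_{\xi}$ is a well-defined element of the space, and it is determined by its inner products with all $f$.

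First, for $f\in\mathcal{A}^{2}_{\beta}$ and $\xi\in\mathbb{D}$, compute
\[
\langle f,(W^{n}_{u,\vartheta})^{*}K_{\xi}\rangle=\langle W^{n}_{u,\vartheta}f,K_{\xi}\rangle=(W^{n}_{u,\vartheta}f)(\xi)=u(\xi)\,f^{(n)}(\vartheta(\xi)).
\]
The middle equality uses $\langle g,K_{\xi}\rangle=g(\xi)$ applied to $g=W^{n}_{u,\vartheta}f\in\mathcal{A}^{2}_{\beta}$. Second, since $\vartheta(\xi)\in\mathbb{D}$, the derivative reproducing property gives $f^{(n)}(\vartheta(\xi))=\langle f,K^{(n)}_{\vartheta(\xi)}\rangle$. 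Therefore
\[
\langle f,(W^{n}_{u,\vartheta})^{*}K_{\xi}\rangle=u(\xi)\langle f,K^{(n)}_{\vartheta(\xi)}\rangle=\langle f,\overline{u(\xi)}\,K^{(n)}_{\vartheta(\xi)}\rangle,
\]
where the scalar is moved into the conjugate-linear slot. Third, this holds for every $f$, so the difference $(W^{n}_{u,\vartheta})^{*}K_{\xi}-\overline{u(\xi)}K^{(n)}_{\vartheta(\xi)}$ is orthogonal to the whole space. It is therefore zero, which is the claimed identity.

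The only step needing care is the derivative reproducing property $\langle f,K^{(n)}_{w}\rangle=f^{(n)}(w)$ with the explicit kernel quoted in the introduction; the paper takes this property as given there. To justify it independently, I would differentiate $f(w)=\langle f,K_{w}\rangle$ $n$ times in $\bar w$ under the inner product. This requires checking that difference quotients of $w\mapsto K_{w}$ converge in norm, which holds because $K_{w}(z)$ is analytic in $\bar w$ with locally uniform bounds on compact subsets of $\mathbb{D}$. The result is $\partial_{\bar w}^{n}K_{w}(z)=z^{n}\prod_{i=2}^{n+1}(\beta+i)\,(1-\bar w z)^{-(\beta+n+2)}$, which is exactly the formula for $K^{(n)}_{w}$ given in the paper. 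Once this is accepted, the rest of the argument is the formal computation above.
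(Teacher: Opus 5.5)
The paper does not prove this lemma. It quotes it from \cite{AH}, and it only asserts the derivative reproducing property $\langle f,K^{(n)}_{\xi}\rangle=f^{(n)}(\xi)$ in the introduction. Your duality computation
$\langle f,(W^{n}_{u,\vartheta})^{*}K_{\xi}\rangle=u(\xi)f^{(n)}(\vartheta(\xi))=\langle f,\overline{u(\xi)}K^{(n)}_{\vartheta(\xi)}\rangle$
is the standard argument behind that citation, and it is correct.

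Your independent check of $K^{(n)}_{w}$ also works, with two small clarifications. First, the bounds you need must be uniform in $z\in\mathbb{D}$, which they are since $|1-\overline{w}z|\ge 1-|w|$. Second, the differentiation is really in $w$; it becomes $\partial_{\overline{w}}$ on the kernel by conjugate-linearity. A quicker route is to note that $f\mapsto f^{(n)}(w)$ is bounded by Cauchy estimates and then read off $K^{(n)}_{w}(z)=\overline{\langle K_{z},K^{(n)}_{w}\rangle}=\overline{(K_{z})^{(n)}(w)}$.
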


\begin{theorem}\label{1t}
Let $\vartheta \in \mathfrak{S}(\mathbb{D})$ and $u \in \mathcal{H}(\mathbb{D})$ be such that the operator $W^{n}_{u,\vartheta}$ is bounded on $\mathcal{A}^{2}_{\beta}$. Then, $W^{n}_{u,\vartheta}$ is complex symmetric with respect to the conjugation $J W_{u_\alpha, \vartheta_\alpha}$ on $\mathcal{A}^{2}_{\beta}$ if and only if the functions $u$ and $\vartheta$ take the following forms:
\begin{equation}
u(z) = \frac{(1 - |\alpha|^2)^{\beta + n + 2} \, u^{(n)}(\overline{\alpha}) \, (z - \overline{\alpha})^n}{\left(~1 - \alpha \vartheta(\overline{\alpha}) + z(\overline{\kappa} \vartheta(\overline{\alpha}) - \alpha)~\right)^{\beta + n + 2}},
\end{equation}
and
\begin{equation}
\vartheta(z) = \vartheta(\overline{\alpha}) + \frac{(1 - |\alpha|^2)(z - \overline{\alpha}) \vartheta'(\overline{\alpha})}{1 - \vartheta(\overline{\alpha}) \alpha + z(~\overline{\kappa} \vartheta(\overline{\alpha}) - \alpha~)}.
\end{equation}
\end{theorem}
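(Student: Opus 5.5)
The plan is to test the operator identity
\[
W^{n}_{u,\vartheta}\,(JW_{u_\alpha,\vartheta_\alpha}) = (JW_{u_\alpha,\vartheta_\alpha})\,(W^{n}_{u,\vartheta})^{*}
\]
on reproducing kernels. This identity is equivalent to $\Upsilon$-complex symmetry: since $\Upsilon^{2}=I$, we have $L=\Upsilon L^{*}\Upsilon$ if and only if $L\Upsilon=\Upsilon L^{*}$. Because the kernels $\{K_\xi:\xi\in\mathbb{D}\}$ span a dense subspace of $\mathcal{A}^2_\beta$, and both sides are bounded conjugate-linear maps, the identity holds if and only if it holds at every $K_\xi$.

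For the right-hand side, the Lemma of \cite{AH} gives $(W^{n}_{u,\vartheta})^{*}K_\xi=\overline{u(\xi)}K^{(n)}_{\vartheta(\xi)}$. We then apply $JW_{u_\alpha,\vartheta_\alpha}$, which is conjugate-linear: this conjugates the scalar to $u(\xi)$, and $J$ turns the $\overline{\vartheta(\xi)}$ appearing inside the kernel into $\vartheta(\xi)$. For the left-hand side, we first compute $JW_{u_\alpha,\vartheta_\alpha}K_\xi$ explicitly. Using the unitary weighted composition structure, this should again be a weight times a kernel evaluated at a point related to $\overline{\xi}$ through the automorphism attached to $\alpha$, giving a rational function in $z$ of the shape $c(\xi)(1-a(\xi)z)^{-(\beta+2)}$ times a Möbius factor. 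We then differentiate $n$ times, compose with $\vartheta$, and multiply by $u$. Equating the two sides yields a functional identity in $(z,\xi)\in\mathbb{D}^2$:
\[
u(z)\,\big[\mathfrak{D}^{n}(JW_{u_\alpha,\vartheta_\alpha}K_\xi)\big](\vartheta(z)) = \big(JW_{u_\alpha,\vartheta_\alpha}\,\overline{u(\xi)}K^{(n)}_{\vartheta(\xi)}\big)(z).
\]

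For \emph{necessity}, we specialize $\xi$ to the point where the automorphism degenerates, namely $\xi=\overline{\alpha}$ (equivalently the point where the kernel becomes a constant or monomial). At that value one side collapses to a simple expression involving $u^{(n)}(\overline\alpha)$ or $u(\overline\alpha)$ and $\vartheta(\overline\alpha)$, and solving gives the formula for $u(z)$ with the factor $(z-\overline\alpha)^n$ coming from $K^{(n)}$. Next we substitute this $u$ back, take a second specialization (or differentiate once in $\xi$ at $\xi=\overline\alpha$), and divide the two resulting equations. Raising to the power $1/(\beta+n+2)$ then produces a linear-fractional equation for $\vartheta(z)$, which should rearrange into the stated Möbius form with coefficients $\vartheta(\overline\alpha)$ and $\vartheta'(\overline\alpha)$.

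For \emph{sufficiency}, we plug the two forms back into the kernel identity and verify it directly. This is an algebraic check that the denominators $1-\alpha\vartheta(\overline\alpha)+z(\overline\kappa\vartheta(\overline\alpha)-\alpha)$ combine symmetrically in $z$ and $\xi$. I expect the main obstacle to be this bookkeeping: computing $JW_{u_\alpha,\vartheta_\alpha}K_\xi$ and its $n$-th derivative cleanly enough to see the symmetry. It also requires handling the branch of the $(\beta+n+2)$-th root when extracting $\vartheta$, which I would fix by continuity and by matching values at $z=\overline\alpha$.
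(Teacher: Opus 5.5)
Your overall route is the paper's: test $W^{n}_{u,\vartheta}\,JW_{u_\alpha,\vartheta_\alpha}=JW_{u_\alpha,\vartheta_\alpha}(W^{n}_{u,\vartheta})^{*}$ on kernels, specialize at $\overline{\alpha}$, extract $\vartheta$ from a first-order expansion at $\overline{\alpha}$, and check the converse directly. However, the step that is supposed to produce $u$ fails for $n\ge 1$. Carrying out the kernel computation (using $\kappa\alpha=\overline{\alpha}$, as the paper does) gives
\[
\frac{u(\xi)(\overline{\alpha}-z)^{n}}{\bigl(1-\alpha\vartheta(\xi)-z(\alpha-\overline{\kappa}\vartheta(\xi))\bigr)^{\beta+n+2}}
=\frac{u(z)(\overline{\alpha}-\xi)^{n}}{\bigl(1-\alpha\vartheta(z)-\xi(\alpha-\overline{\kappa}\vartheta(z))\bigr)^{\beta+n+2}} .
\]
At $\xi=\overline{\alpha}$ the function $JW_{u_\alpha,\vartheta_\alpha}K_{\overline{\alpha}}$ is constant, as you anticipate. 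So its $n$-th derivative vanishes and the side containing $u(z)$ is identically zero. The specialization therefore only yields $u(\overline{\alpha})=0$; there is nothing to solve for $u(z)$. The missing idea is an order-of-vanishing argument. Fixing $z\neq\overline{\alpha}$ in the identity shows that $u$ vanishes to order at least $n$ at $\overline{\alpha}$, so $u(z)=(z-\overline{\alpha})^{n}g(z)$. You must cancel the common factor $(z-\overline{\alpha})^{n}(\xi-\overline{\alpha})^{n}$ first and only then set $\xi=\overline{\alpha}$ (equivalently, differentiate $n$ times in $\xi$ at $\overline{\alpha}$). This is exactly the paper's step $u=(z-\overline{\alpha})^{j}g$, $j=n$. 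It gives $g(z)=g(\overline{\alpha})(1-|\alpha|^{2})^{\beta+n+2}/E(z)^{\beta+n+2}$ with $E(z)=1-\alpha\vartheta(\overline{\alpha})+z(\overline{\kappa}\vartheta(\overline{\alpha})-\alpha)$.

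Be aware that $g(\overline{\alpha})=u^{(n)}(\overline{\alpha})/n!$, not $u^{(n)}(\overline{\alpha})$. Since $E(\overline{\alpha})=1-|\alpha|^{2}$, differentiating the stated formula for $u$ $n$ times at $\overline{\alpha}$ returns $n!\,u^{(n)}(\overline{\alpha})$. As written, the necessity direction is therefore false for $n\ge 2$ unless $u\equiv 0$. The paper's proof hides this by asserting $u^{(n)}(\overline{\alpha})=g(\overline{\alpha})$, and your derivation will (correctly) produce the $1/n!$. You also need $u\not\equiv 0$ to divide by $g$ in the $\vartheta$ step; if $u\equiv 0$ the operator is zero and $\vartheta$ is unconstrained.

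With these corrections your $\vartheta$ step is sound and equivalent to the paper's: the paper differentiates in $z$ at $z=\overline{\alpha}$, you in $\xi$ at $\xi=\overline{\alpha}$. After substituting $u$, both bases of the $(\beta+n+2)$-th powers equal $(1-|\alpha|^{2})E(z)$ at $\xi=\overline{\alpha}$. A logarithmic derivative then removes the exponent, and no choice of root is needed. Finally, the converse ``algebraic check'' that you (and the paper) leave undone reduces to the identity
\[
E(z)\bigl(1-\alpha\vartheta(z)-\xi(\alpha-\overline{\kappa}\vartheta(z))\bigr)=E(z)E(\xi)+\overline{\kappa}(1-|\alpha|^{2})\vartheta'(\overline{\alpha})(z-\overline{\alpha})(\xi-\overline{\alpha}),
\]
whose right-hand side is symmetric in $z$ and $\xi$.
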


\begin{proof}
 First, suppose that $W^{n}_{u, \vartheta}$ is complex symmetric with respect to conjugation $JW_{u_ {\alpha}, \vartheta_ {\alpha}}$. Then
 \begin{equation}\label{e1}JW_{u_ {\alpha}, \vartheta_ {\alpha}}(W^{n}_{u,\vartheta})^{*}K_{\xi}(z)= W^{n}_{u,\vartheta}JW_{u_ {\alpha}, \vartheta_ {\alpha}}K_{\xi}(z)\end{equation}
we can calculate
\begin{align}\label{11}
 JW_{u_ {\alpha}, \vartheta_ {\alpha}}(W^{n}_{u,\vartheta})^{*}K_{\xi}(z)&=  JW_{u_ {\alpha}, \vartheta_ {\alpha}}\overline{u(\xi)} K^{(n)}_{\vartheta(\xi)}(z) \notag\\
 &= \overline{u_ {\alpha}(\overline{z})}u(\xi)\overline{K^{(n)}_{\vartheta(\xi)}(\overline{z})}\notag\\
 &=\dfrac{(1-|\alpha|^{2})^{\frac{\beta+2}{2}}}{(1-  \alpha  z)^{\beta +2}}u(\xi)\dfrac{\overline{\vartheta_ {\alpha}(\overline{z})}^{n}\displaystyle\prod_{i=2}^{n+1}(\beta+i)}{(1-\vartheta(\xi)\overline{\vartheta_ {\alpha}(\overline{z})})^{\beta+n+2}}\notag\\
 &= \dfrac{(1-|\alpha|^{2})^{\frac{\beta+2}{2}}u(\xi)\overline{\kappa}^{n}(\overline{\alpha}-z)^{n}\displaystyle\prod_{i=2}^{n+1}(\beta+i)}{(1-  \alpha  z-\vartheta(\xi)\overline{\kappa}\overline{\alpha}+\vartheta(\xi)\overline{\kappa}z)^{\beta+n+2}}\notag\\
 &= \dfrac{(1-|\alpha|^{2})^{\frac{\beta+2}{2}}u(\xi)(\overline{\alpha}-z)^{n}\displaystyle\prod_{i=2}^{n+1}(\beta+i)}{\left(1-\vartheta(\xi)\overline{\kappa}\overline{\alpha}-z( \alpha -\overline{\kappa}\vartheta(\xi)\right)^{\beta +n+2}} 
\end{align}
and 
\begin{align}\label{22}
 W^{n}_{u,\vartheta}JW_{u_ {\alpha}, \vartheta_ {\alpha}}K_{\xi}(z)
 &=   W^{n}_{u,\vartheta}Ju_ {\alpha}(z)K_{\xi}(\vartheta_ {\alpha})(z)\notag\\
 &= W^{n}_{u,\vartheta}\overline{u_ {\alpha}(\overline{z})K_{\xi}(\vartheta_ {\alpha})(\overline{z})}\notag\\
 &= u(z)\left(\dfrac{(1-|\alpha|^{2})^{\frac{\beta+2}{2}}}{(1-  \alpha  z)^{\beta +2}}\dfrac{1}{\left(1-\xi\overline{\vartheta_ {\alpha}(\overline{z})}\right)^{\beta+2}}\right)^{(n)}\circ\vartheta(z)\notag\\
 &= \dfrac{(1-|\alpha|^{2})^{\frac{\beta+2}{2}}u(z)(\overline{\alpha}-\xi)^{n}\displaystyle\prod_{i=2}^{n+1}(\beta+i)}{\left(1-\vartheta(z)\overline{\kappa}\overline{\alpha}-\xi( \alpha -\overline{\kappa}\vartheta(z))\right)^{\beta +n+2}}
\end{align}

Therefore, from \eqref{e1}, \eqref{11} and \eqref{22}, we have
\begin{equation}\label{e2}
\dfrac{u(\xi)(\overline{\alpha}-z)^{n}}{\left(1-\vartheta(\xi)\overline{\kappa}\overline{\alpha}-z( \alpha -\overline{\kappa}\vartheta(\xi))\right)^{\beta +n+2}} = \dfrac{u(z)(\overline{\alpha}-\xi)^{n}}{\left(1-\vartheta(z)\overline{\kappa}\overline{\alpha}-\xi( \alpha -\overline{\kappa}\vartheta(z))\right)^{\beta +n+2}}
 \end{equation}
Set $u(z)=(z-\overline{\alpha})^{j}g(z),$ where $g$ is analytic on $\mathbb{D}$ with $g(\overline{\alpha})\neq 0$ and $j$ is a positive integer.
Then \eqref{e2} becomes
\begin{equation}\label{e3}
\dfrac{g(\xi)(\xi-\overline{\alpha})^{j-n}}{\left(1-\vartheta(\xi)\overline{\kappa}\overline{\alpha}-z( \alpha -\overline{\kappa}\vartheta(\xi))\right)^{\beta +n+2}} = \dfrac{g(z)(z-\overline{\alpha})^{j-n}}{\left(1-\vartheta(z)\overline{\kappa}\overline{\alpha}-\xi( \alpha -\overline{\kappa}\vartheta(z))\right)^{\beta +n+2}}
 \end{equation}
If $j\ge n,$ let $\xi=\overline{\alpha}$ in \eqref{e3}, we get $g\equiv 0$ which is a contradiction to the assumption that $g(\overline{\alpha})\neq 0.$ Thus $j=n$ and we obtain \eqref{e3} as
\begin{equation}\label{e4}
 \dfrac{g(\xi)}{\left(1-\vartheta(\xi)\overline{\kappa}\overline{\alpha}-z( \alpha -\overline{\kappa}\vartheta(\xi))\right)^{\beta +n+2}} = \dfrac{g(z)}{\left(1-\vartheta(z)\overline{\kappa}\overline{\alpha}-\xi( \alpha -\overline{\kappa}\vartheta(z))\right)^{\beta +n+2}}
   \end{equation}
Let $\xi=\overline{\alpha}.$ Noting that $\kappa  \alpha =\overline{\alpha},$ we get
\begin{equation}\label{e5}
 \dfrac{g(\overline{\alpha})}{\left(1-\vartheta(\overline{\alpha})\alpha-z( \alpha -\overline{\kappa}\vartheta(\overline{\alpha}))\right)^{\beta +n+2}} = \dfrac{g(z)}{\left(1-\vartheta(z) \alpha -\overline{\alpha}( \alpha -\overline{\kappa}\vartheta(z))\right)^{\beta +n+2}}
   \end{equation}
 Implies
 \begin{equation*}
     g(z)= \dfrac{(1-|\alpha|^{2})^{\beta+n+2} g(\overline{\alpha})}{\left(1- \alpha\vartheta(\overline{\alpha})+z(\overline{\kappa}\vartheta(\overline{\alpha})- \alpha)\right)^{\beta +n+2}}
 \end{equation*}
 Since $ u(z)= (z-\overline{\alpha})^{n}g(z)$, we obtain that
 \begin{equation}\label{e6}
     u(z)= \dfrac{(1-|\alpha|^{2})^{\beta+n+2} u^{(n)}(\overline{\alpha})(z-\overline{\alpha})^{n}}{\left(1- \alpha\vartheta(\overline{\alpha})+z(\overline{\kappa}\vartheta(\overline{\alpha})- \alpha)\right)^{\beta +n+2}}
 \end{equation}
 where $u^{(n)}(\overline{\alpha})= g(\overline{\alpha})\neq0.$
 From \eqref{e2} and \eqref{e6}, we have
 \begin{align}\label{e7}
    &\dfrac{(1-|\alpha|^{2})^{\beta+n+2}  u^{n}(\overline{\alpha})(\xi-\overline{\alpha})^{n}(\overline{\alpha}-z)^{n}}{\left(1- \alpha\vartheta(\overline{\alpha})+\xi(\overline{\kappa}\vartheta(\overline{\alpha})- \alpha)\right)^{\beta +n+2}\left(1-\vartheta(\xi)\overline{\kappa}\overline{\alpha}-z( \alpha -\overline{\kappa}\vartheta(\xi))\right)^{\beta +n+2}}\notag\\
    &= \dfrac{(1-|\alpha|^{2})^{\beta+n+2} u^{n}(\overline{\alpha})(z-\overline{\alpha})^{n}(\overline{\alpha}-\xi)^{n}}{\left(1- \alpha\vartheta(\overline{\alpha})+z(\overline{\kappa}\vartheta(\overline{\alpha})- \alpha)\right)^{\beta +n+2}\left(1-\vartheta(z)\overline{\kappa}\overline{\alpha}-\xi( \alpha -\overline{\kappa}\vartheta(z))\right)^{\beta +n+2}}
  \end{align}
  Implies
  \begin{align}\label{e8}
    &\left(1- \alpha\vartheta(\overline{\alpha})+z(\overline{\kappa}\vartheta(\overline{\alpha})- \alpha)\right)^{\beta +n+2}\left(1-\vartheta(z)\overline{\kappa}\overline{\alpha}-\xi( \alpha -\overline{\kappa}\vartheta(z))\right)^{\beta +n+2}\notag\\
    &=  \left(1- \alpha\vartheta(\overline{\alpha})+\xi(\overline{\kappa}\vartheta(\overline{\alpha})- \alpha)\right)^{\beta +n+2}\left(1-\vartheta(\xi)\overline{\kappa}\overline{\alpha}-z( \alpha -\overline{\kappa}\vartheta(\xi))\right)^{\beta +n+2}
  \end{align}
  Differentiate \eqref{e8} both sides w.r.t. $z$, we have
  \begin{align}\label{e9}
    &(\overline{\kappa}\vartheta(\overline{\alpha})- \alpha)\left(1- \alpha\vartheta(\overline{\alpha})+z(\overline{\kappa}\vartheta(\overline{\alpha})- \alpha)\right)^{\beta +n+1}\left(1-\vartheta(z)\overline{\kappa}\overline{\alpha}-\xi( \alpha -\overline{\kappa}\vartheta(z))\right)^{\beta +n+2}\notag\\
    &+ \left(1- \alpha\vartheta(\overline{\alpha})+z(\overline{\kappa}\vartheta(\overline{\alpha})- \alpha)\right)^{\beta +n+2}\overline{\kappa}(\overline{\alpha}-\xi)\vartheta'(z)\left(1-\vartheta(z)\overline{\kappa}\overline{\alpha}-\xi( \alpha -\overline{\kappa}\vartheta(z))\right)^{\beta +n+1}\notag\\
    &=\left(1- \alpha\vartheta(\overline{\alpha})+\xi(\overline{\kappa}\vartheta(\overline{\alpha})- \alpha)\right)^{\beta +n+2}\left(- \alpha+\overline{\kappa}\vartheta(\xi)\right)\left(1-\vartheta(\xi)\overline{\kappa}\overline{\alpha}-z( \alpha -\overline{\kappa}\vartheta(\xi))\right)^{\beta +n+1}
  \end{align}
  Implies
  \begin{align}\label{e9}
   &(\overline{\kappa}\vartheta(\overline{\alpha})- \alpha) \left(1-\vartheta(z)\overline{\kappa}\overline{\alpha}-\xi( \alpha -\overline{\kappa}\vartheta(z))\right)+ \left(1- \alpha\vartheta(\overline{\alpha})+z(\overline{\kappa}\vartheta(\overline{\alpha})- \alpha)\right)\overline{\kappa}(\overline{\alpha}-\xi)\vartheta'(z) \notag\\
   &= \left(- \alpha+\overline{\kappa}\vartheta(\xi)\right)\left(1-\vartheta(\xi)\overline{\kappa}\overline{\alpha}-z( \alpha -\overline{\kappa}\vartheta(\xi))\right)
  \end{align}

  Now substitute $z=\overline{\alpha}$ with $\kappa  \alpha =\overline{\alpha}$ in \eqref{e9}, we obtain

  \begin{equation}
      \vartheta(\xi)= \vartheta(\overline{\alpha})+ \dfrac{(1-|\alpha|^{2})(\xi-\overline{\alpha})\vartheta'(\overline{\alpha})}{\left(1-\vartheta(\overline{\alpha})\alpha+\xi(\overline{\kappa}\vartheta(\overline{\alpha})- \alpha)\right)}
  \end{equation}

  Conversely, assume that 
  \begin{equation}
      u(z)= \dfrac{(1-|\alpha|^{2})^{\beta+n+2} u^{(n)}(\overline{\alpha})(z-\overline{\alpha})^{n}}{\left(1- \alpha\vartheta(\overline{\alpha})+z(\overline{\kappa}\vartheta(\overline{\alpha})- \alpha)\right)^{\beta +n+2}}
  \end{equation}
  and \begin{equation}
     \vartheta(z)= \vartheta(\overline{\alpha})+ \dfrac{(1-|\alpha|^{2})(z-\overline{\alpha})\vartheta'(\overline{\alpha})}{\left(1-\vartheta(\overline{\alpha})\alpha+z(\overline{\kappa}\vartheta(\overline{\alpha})- \alpha)\right)}
  \end{equation}
  By our assumption $W^{n}_{u,\vartheta}$ is bounded on $\mathcal{A}^{2}_{\beta}.$ Hence, it is enough to verify that \eqref{e1} holds for any $z,\xi \in \mathbb{D}.$
  Therefore, we have
  \begin{align}
      &JW_{u_ {\alpha}, \vartheta_ {\alpha}}(W^{n}_{u,\vartheta})^{*}K_{\xi}(z)= \dfrac{(1-|\alpha|^{2})^{\frac{\beta+2}{2}}u(\xi)(\overline{\alpha}-z)^{n}\displaystyle\prod_{i=2}^{n+1}(\beta+i)}{\left(1-\vartheta(\xi)\overline{\kappa}\overline{\alpha}-z( \alpha -\overline{\kappa}\vartheta(\xi)\right)^{\beta +n+2}} \notag\\
      &= \dfrac{(1-|\alpha|^{2})^{\frac{3(\beta+2)}{2}+2n}\overline{\kappa}(\xi-\overline{\alpha})u^{(1)}(\overline{\alpha})(\overline{\alpha}-z)^{n}\displaystyle\prod_{i=2}^{n+1}(\beta+i)}{\Big\{\left(1-  \alpha  z-\overline{\kappa}(\overline{\alpha}-z)\vartheta(\overline{\alpha})\right)\left(1- \alpha\vartheta(\overline{\alpha})+(\overline{\kappa}\vartheta(\overline{\alpha})- \alpha)\xi\right)-\overline{\kappa}(\overline{\alpha})-z)(\left(1-|\alpha|^{2}\vartheta^{'}(\overline{\alpha})(\xi-\overline {\alpha})\right)\Big\}^{\beta +n+2} }
  \end{align}
  and 
  \begin{align}
    &W^{n}_{u,\vartheta}JW_{u_ {\alpha}, \vartheta_ {\alpha}}K_{\xi}(z)= \dfrac{(1-|\alpha|^{2})^{\frac{\beta+2}{2}}u(z)(\overline{\alpha}-\xi)^{n}\displaystyle\prod_{i=2}^{n+1}(\beta+i)}{\left(1-\vartheta(z)\overline{\kappa}\overline{\alpha}-\xi( \alpha -\overline{\kappa}\vartheta(z))\right)^{\beta +n+2}} \notag\\
    &=\dfrac{(1-|\alpha|^{2})^{\frac{3(\beta+2)}{2}+2n}\overline{\kappa}(\overline{\alpha}-z)u^{(1)}(\overline{\alpha})(\xi-\overline{\alpha})^{n}\displaystyle\prod_{i=2}^{n+1}(\beta+i)}{\Big\{\left(1- \alpha\xi-\overline{\kappa}(\overline{\alpha}-\xi)\vartheta(\overline{\alpha})\right)\left(1- \alpha\vartheta(\overline{\alpha})+(\overline{\kappa}\vartheta(\overline{\alpha})- \alpha)z\right)-\overline{\kappa}(\overline{\alpha})-\xi)\left(1-|\alpha|^{2}\vartheta^{'}(\overline{\alpha})(z-\overline {\alpha})\right)\Big\}^{\beta +n+2}} 
  \end{align}
  Since the numerator are same, it is enough to verify that denominator is also same.
This completes the proof.
  \end{proof}

  \begin{theorem}\label{2t}
Let $\vartheta$ be a member of the class $\mathfrak{S}(\mathbb{D})$, and let $u$ be a non-zero analytic function in $\mathcal{H}(\mathbb{D})$. Suppose that the operator $W^{n}_{u,\vartheta}$ is bounded on the weighted Bergman space $\mathcal{A}^{2}_{\beta}$. Then $W^{n}_{u,\vartheta}$ is self-adjoint if and only if the functions $u$ and $\vartheta$ take the form
\begin{equation*}
    u(z) = \frac{c_0 z^n}{(1 - c_1 z)^{\beta + n + 2}}, \quad \text{and} \quad \vartheta(z) = c_1 + \frac{c_2}{1 - c_1 z},
\end{equation*}
where $c_0, c_2 \in \mathbb{R}$, $c_0 \ne 0$, and $c_1 \in \mathbb{C}$.
\end{theorem}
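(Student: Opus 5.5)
The plan is to test self-adjointness on reproducing kernels, using the adjoint formula of the Lemma from \cite{AH}. Since the kernels $K_\xi$, $\xi\in\mathbb{D}$, span a dense subspace of $\mathcal{A}^2_\beta$, $W^n_{u,\vartheta}$ is self-adjoint if and only if $(W^n_{u,\vartheta})^*K_\xi = W^n_{u,\vartheta}K_\xi$ for every $\xi\in\mathbb{D}$. The Lemma gives the left side as $\overline{u(\xi)}K^{(n)}_{\vartheta(\xi)}(z)$. A direct differentiation gives the right side:
\[
W^n_{u,\vartheta}K_\xi(z)=u(z)\,\frac{\overline{\xi}^{\,n}\prod_{i=2}^{n+1}(\beta+i)}{(1-\overline{\xi}\vartheta(z))^{\beta+n+2}}.
\]
Cancelling the common product, self-adjointness is equivalent to the functional identity
\begin{equation*}
\frac{\overline{u(\xi)}\,z^n}{(1-\overline{\vartheta(\xi)}z)^{\beta+n+2}}=\frac{u(z)\,\overline{\xi}^{\,n}}{(1-\overline{\xi}\vartheta(z))^{\beta+n+2}},\qquad z,\xi\in\mathbb{D}.
\end{equation*}

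For necessity, I first determine the order of vanishing of $u$ at $0$, in the same way as the factorization $u=(z-\overline{\alpha})^j g$ is handled in the proof of Theorem \ref{1t}. Write $u(z)=z^j g(z)$ with $g(0)\neq 0$; this is possible because $u\not\equiv0$. Put $\xi=0$ in the identity: the right side vanishes for $n\ge1$, so the left side forces $j\ge n$ (the case $n=0$ is trivial). Conversely, fix $\xi\neq 0$ and compare the order of vanishing at $z=0$: the left side vanishes to order exactly $n$ whenever $u(\xi)\ne0$, while the right side vanishes to order $j$, so $j=n$. Dividing out $z^n\overline{\xi}^{\,n}$ then gives
\[
\frac{\overline{g(\xi)}}{(1-\overline{\vartheta(\xi)}z)^{\beta+n+2}}=\frac{g(z)}{(1-\overline{\xi}\vartheta(z))^{\beta+n+2}}.
\]
Setting $\xi=0$ yields $g(z)=\overline{g(0)}(1-\overline{\vartheta(0)}z)^{-(\beta+n+2)}$, and evaluating at $z=0$ shows $g(0)=\overline{g(0)}$. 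So $c_0:=g(0)$ is real and nonzero, and with $c_1:=\overline{\vartheta(0)}$ we obtain the stated form of $u$.

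Next I substitute this form of $g$ back into the identity and take $(\beta+n+2)$-th roots. This step is legitimate because both sides are nonvanishing analytic functions on the disk that agree at $z=\xi=0$. It leaves the bilinear relation
\[
(1-c_1 z)(1-\overline{\xi}\vartheta(z))=(1-\overline{c_1\xi})(1-\overline{\vartheta(\xi)}z).
\]
Differentiating in $z$ at $z=0$ and writing $\vartheta(0)=\overline{c_1}$ gives $\overline{\vartheta(\xi)}=c_1+\overline{\xi}(\vartheta'(0)-|c_1|^2)\cdot(\text{stuff})$. Rather than differentiating, however, I would solve the relation directly for $\vartheta(z)$:
\[
\vartheta(z)=\frac{1}{\overline{\xi}}\Bigl(1-\frac{(1-\overline{c_1\xi})(1-\overline{\vartheta(\xi)}z)}{1-c_1z}\Bigr).
\]
Letting $\xi\to0$, or comparing coefficients, shows that $\vartheta(z)=a+\frac{b}{1-c_1z}$. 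Imposing the symmetry (conjugate-swap) condition on the constants then forces $b\in\mathbb{R}$ and $a=c_1$ in the normalization of the statement. Here I expect the main obstacle: the bookkeeping of conjugates, and reconciling $\vartheta(0)=\overline{c_1}$ with the stated constant $c_1$. If this does not match cleanly, I would reparametrize, putting $c_2=b$ and absorbing the constant term, and check consistency by substitution.

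Sufficiency is a direct verification. Take $u(z)=c_0z^n(1-c_1z)^{-(\beta+n+2)}$ and $\vartheta=c_1+c_2/(1-c_1z)$ with $c_0,c_2$ real, plug them into the kernel identity above, and check that both denominators reduce to the same Hermitian-symmetric expression in $(z,\overline{\xi})$. Density of the kernels then gives $W^n_{u,\vartheta}=(W^n_{u,\vartheta})^*$.
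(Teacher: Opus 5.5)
Up to the bilinear relation your argument is correct and follows the paper's route. The paper differentiates the $(\beta+n+2)$-th power identity in $z$ at $z=0$ instead of taking roots. Your order-of-vanishing step is more careful than the paper's ``similar argument''. Note, though, that setting $\xi=0$ only gives $u(0)=0$; it is your second comparison, at some $\xi\neq0$ with $u(\xi)\neq0$, that yields $j=n$.

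The genuine gap is the next step, which you assert rather than compute, and which is false. Differentiate $(1-c_1z)(1-\overline{\xi}\vartheta(z))=(1-\overline{c_1\xi})(1-\overline{\vartheta(\xi)}z)$ in $z$ at $z=0$. (Your $\xi\to0$ limit is a $0/0$ limit and amounts to the same computation.) This gives
\[
\overline{\vartheta(\xi)}\,\bigl(1-\overline{c_1\xi}\bigr)=c_1+\overline{\xi}\bigl(\vartheta'(0)-|c_1|^2\bigr),
\qquad\text{hence}\qquad
\vartheta(z)=\overline{c_1}+\frac{\overline{\vartheta'(0)}\,z}{1-c_1z},
\]
and then $\vartheta'(0)=\overline{\vartheta'(0)}$. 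This has constant term $\overline{c_1}$ and a factor $z$ in the numerator, so it is not of the form $c_1+\frac{b}{1-c_1z}$ with $b$ real. For $c_1=0$ it is $\vartheta(z)=\vartheta'(0)z$, whereas the printed form would be constant.

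No reparametrization can rescue this, because the theorem as printed is false in both directions.
\begin{itemize}
\item With $c_1=0$ and $0\neq c_2\in(-1,1)$, the printed family gives $u=c_0z^n$, $\vartheta\equiv c_2$. The kernel identity would then force $(1-c_2z)^{\beta+n+2}=(1-c_2\overline{\xi})^{\beta+n+2}$, which is false.
\item More generally, for the printed family your two denominators are $1-c_1z-(c_1+c_2)\overline{\xi}+c_1^2z\overline{\xi}$ and $1-(\overline{c_1}+c_2)z-\overline{c_1}\,\overline{\xi}+\overline{c_1}^{\,2}z\overline{\xi}$. These agree only when $c_2=0$ and $c_1\in\mathbb{R}$, so your sufficiency check fails.
\item Conversely, $u(z)=z^n$, $\vartheta(z)=z/2$ is diagonal on the orthogonal monomials with real entries $\frac{m!}{(m-n)!}2^{n-m}$. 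Hence it is bounded and self-adjoint, but it is not in the printed family.
\end{itemize}

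The paper's own proof actually obtains the formula above (its expression for $\vartheta(\xi)$ after differentiating). Its converse computation uses $u(z)=\frac{c_0z^n}{(1-\overline{c_1}z)^{\beta+n+2}}$ and $\vartheta(z)=c_1+\frac{c_2z}{1-\overline{c_1}z}$ with $c_1=\vartheta(0)$, so the statement has a misprint.

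Your method, with the last step actually carried out, proves the corrected characterization. In your normalization $c_1=\overline{\vartheta(0)}$ it reads
\[
u(z)=\frac{c_0z^n}{(1-c_1z)^{\beta+n+2}},\qquad \vartheta(z)=\overline{c_1}+\frac{c_2z}{1-c_1z},
\]
with $c_0\in\mathbb{R}\setminus\{0\}$ and $c_2=\vartheta'(0)\in\mathbb{R}$. For this family both denominators equal $1-c_1z-\overline{c_1}\,\overline{\xi}+(|c_1|^2-c_2)z\overline{\xi}$, so sufficiency goes through. You should state and prove that version and flag the discrepancy, rather than force the printed normalization.
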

\begin{proof}
First suppose that $W^{n}_{u, \vartheta}$ is self adjoint, we have
\begin{equation}\label{ee1}
  W^{n}_{u, \vartheta}K_{\xi}(z)= (W^{n}_{u, \vartheta} )^{*}K_{\xi}(z)
\end{equation}
we can calculate
\begin{equation}\label{ee2}(W^{n}_{u, \vartheta} )^{*}K_{\xi}= \overline{u(\xi)}\displaystyle\prod_{i=2}^{n+1}(\beta+i)\dfrac{z^{n}}{(1-\overline{\vartheta(\xi)}z)^{\beta+n+2}}
\end{equation}
and 
\begin{equation}\label{ee3}
   W^{n}_{u, \vartheta}K_{\xi}(z)= u(z)\displaystyle\prod_{i=2}^{n+1}(\beta+i)\dfrac{\xi^{n}}{(1-\overline{\xi}\vartheta(z))^{\beta+n+2}} 
\end{equation}
Therefore, we have
\begin{equation}\label{ee4}
    \dfrac{\overline{u(\xi)}z^{n}}{(1-\overline{\vartheta(\xi)}z)^{\beta+n+2}}= \dfrac{u(z)(\overline{\xi})^{n}}{(1-\overline{\xi}\vartheta(z))^{\beta+n+2}}
\end{equation}
Set $z=0$ gives $u(0)=0$, i.e., $u(z)= z^{j}g(z)$ for some $j\in \mathbb{N}$ and $g$ is analytic function with $g(0)\neq 0$.
Applying similar argument as in the proof of previous theorem, we get $j=n$ and $u(z)= z^{n}g(z)$, then 
\begin{equation}\label{ee5}
    \dfrac{\overline{g(\xi)}}{(1-\overline{\vartheta(\xi)}z)^{\beta+n+2}}=\dfrac{g(z)}{(1-\overline{\xi}\vartheta(z))^{\beta+n+2}}
\end{equation}
Set $\xi=0$ in \eqref{ee5}, we get
\begin{equation}\label{ee6}
    g(z)= \dfrac{\overline{g(0)}}{(1-\overline{\vartheta(0)}z)^{\beta+n+2}}
\end{equation}
If we substitute $z=0$ in \eqref{ee6}, we get 
$$ \overline{g(0)}= g(0)\;\;\; i.e.,\;\; g(0)\in \mathbb{R}.$$
Thus $u(z)= \dfrac{g(0) z^{n}}{(1-\overline{\vartheta(0)}z)^{\beta+n+2}}$.
From \eqref{ee4}, we have
\begin{equation}\label{ee7}
  (1-\vartheta(0)\overline{\xi})^{\beta+n+2}(1-\overline{\vartheta(\xi)}z)^{\beta+n+2} = (1-\overline{\vartheta(0)}z)^{\beta+n+2}  (1-\vartheta(z)\overline{\xi})^{\beta+n+2}
\end{equation}
Differentiate with respect to $z$, we get
\begin{align}\label{ee8}
    -&\overline{\vartheta(\xi)}(1-\vartheta(0)\overline{\xi})^{\beta+n+2}(1-\overline{\vartheta(\xi)}z)^{\beta+n++1}\notag\\
    &= -\overline{\vartheta(0)}(1-\overline{\vartheta(0)}z)^{\beta+n+1}  (1-\vartheta(z)\overline{\xi})^{\beta+n+2}-\overline{\xi}\vartheta'(z)(1-\overline{\vartheta(0)}z)^{\beta+n+2}  (1-\vartheta(z)\overline{\xi})^{\beta+n+1}
\end{align}
Set $z=0$, we have
\begin{align}\label{ee9}
    \overline{\vartheta(\xi)}(1-\vartheta(0)\overline{\xi})^{\beta+n+2}&= \overline{\vartheta(0)} (1-\vartheta(0)\overline{\xi})^{\beta+n+2}-\overline{\xi}\vartheta'(0)(1-\vartheta(0)\overline{\xi})^{\beta+n+1}\notag\\
    \vartheta(\xi)=\vartheta(0)+ \dfrac{\xi\overline{\vartheta'(0)}}{1-\overline{\vartheta(0)}\xi}
\end{align}
Then 
\begin{equation}\label{ee10}
  \vartheta'(\xi)= \dfrac{\overline{\vartheta'(0)}}{1-\overline{\vartheta(0)}\xi} 
\end{equation}
setting $\xi=0$, we get $\vartheta'(0)=\overline{\vartheta'(0)}$ implies $c_{0}=\vartheta'(0) \in \mathbb{R}$
Hence, if $W^{n}_{u, \vartheta}$ is self adjoint, then $u$ and $\vartheta$ are of the form
\begin{equation*}
    u(z)=\dfrac{c_{0}z^{n}}{(1-c_{1}z)^{\beta+n+2}}
\end{equation*}
and 
\begin{equation*}
    \vartheta(z)=c_{1}+\dfrac{c_{2}}{(1-c_{1}z)}
\end{equation*}
where $c_{0},c_{2} \in \mathbb{R}$ and $c_{1} \in \mathbb{C}$ with $c_{0}\neq 0$.\\
Conversely, assume that 
\begin{equation*}
    u(z)=\dfrac{c_{0}z^{n}}{(1-c_{1}z)^{\beta+n+2}}
\end{equation*}
and 
\begin{equation*}
    \vartheta(z)=c_{1}+\dfrac{c_{2}}{(1-c_{1}z)}
\end{equation*}
where $c_{0},c_{2} \in \mathbb{R}$ and $c_{1} \in \mathbb{C}$ with $c_{0}\neq 0$.\\
One can easily calculate that
\begin{equation*}
    (W^{n}_{u, \vartheta})^{*}K_{\xi}(z)= \dfrac{\displaystyle\prod_{i=2}^{n+1}(\beta+i) c_{0}\overline{\xi}^{n}z^{n}}{(1-c_{1}\overline{\xi}-\overline{c_{1}}z(1-c_{1}\overline{\xi})-c_{2}\overline{\xi}z)^{\beta+n+2}}= W^{n}_{u, \vartheta}K_{\xi}(z)
\end{equation*}
Thus, $W^{n}_{u, \vartheta}$ is self-adjoint.
This completes the proof.
\end{proof}

\section{Normality and spectral properties generalized product type operators}
In this section we investigate the Normality and spectrum of the normal weighted composition-differentiation operators.
\begin{theorem}\label{3}
Let $\vartheta \in \mathfrak{S}(\mathbb{D})$ and let $u$ be a non-zero function in $\mathcal{H}(\mathbb{D})$ such that the operator $W^{n}_{u,\vartheta}$ is bounded on the space $\mathcal{A}^{2}_{\beta}$. Suppose $W^{n}_{u,\vartheta}$ is complex symmetric with respect to the conjugation $J W_{u_\alpha, \vartheta_\alpha}$, and that $\vartheta(\overline{\alpha}) = \overline{\alpha}$. Then, $W^{n}_{u,\vartheta}$ is normal if and only if the functions $u$ and $\vartheta$ are of the form
\[
u(z) =  \alpha z^n \quad \text{and} \quad \vartheta(z) = c z,
\]
where $ \alpha = u^{(n)}(0)$ and $c = \vartheta'(0)$.
\end{theorem}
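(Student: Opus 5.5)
Our plan is to use Theorem \ref{1t} to write $u$ and $\vartheta$ explicitly, then test normality on reproducing kernels via the adjoint formula of the Lemma. By Theorem \ref{1t}, complex symmetry gives
\[
u(z)=\frac{(1-|\alpha|^2)^{\beta+n+2}u^{(n)}(\overline{\alpha})(z-\overline{\alpha})^n}{\bigl(1-\alpha\vartheta(\overline{\alpha})+z(\overline{\kappa}\vartheta(\overline{\alpha})-\alpha)\bigr)^{\beta+n+2}},
\]
together with the Möbius-type formula for $\vartheta$. Imposing $\vartheta(\overline{\alpha})=\overline{\alpha}$ and using $\kappa\alpha=\overline{\alpha}$ (as in the proof of Theorem \ref{1t}), the term $\overline{\kappa}\vartheta(\overline{\alpha})-\alpha$ becomes $\overline{\kappa}\,\overline{\alpha}-\alpha=0$. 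The denominators therefore collapse to the constant $1-|\alpha|^2$. Hence $u(z)=(1-|\alpha|^2)^{\,\cdot}\,u^{(n)}(\overline{\alpha})(z-\overline{\alpha})^n$ up to the constant factor, and $\vartheta(z)=\overline{\alpha}+\vartheta'(\overline{\alpha})(z-\overline{\alpha})$ is affine. So a normal operator in this class is a constant multiple of $f\mapsto (z-\overline{\alpha})^n f^{(n)}(\overline{\alpha}+c(z-\overline{\alpha}))$.

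For the ``if'' direction, when $u(z)=az^n$ and $\vartheta(z)=cz$ (reading the statement's $\alpha$ in $u$ as the constant $u^{(n)}(0)$), we compute on monomials. We have $W^n_{u,\vartheta}z^m=a\,\frac{m!}{(m-n)!}c^{m-n}z^m$ for $m\ge n$, and $0$ otherwise. The operator is therefore diagonal in the orthogonal basis $\{z^m\}$ of $\mathcal{A}^2_\beta$, hence normal, and also bounded provided $|c|<1$ or the relevant condition holds.

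For the ``only if'' direction, we compare $\|W^n_{u,\vartheta}K_\xi\|$ with $\|(W^n_{u,\vartheta})^*K_\xi\|$. Equivalently, we compare $\langle W^*WK_\xi,K_\xi\rangle$ with $\langle WW^*K_\xi,K_\xi\rangle$, computed via the Lemma: $(W^n_{u,\vartheta})^*K_\xi=\overline{u(\xi)}K^{(n)}_{\vartheta(\xi)}$. The most efficient evaluation point is $\xi=0$. There $K_0=1$, so $W K_0=0$ when $n\ge1$, while $W^*K_0=\overline{u(0)}K^{(n)}_{\vartheta(0)}$. Normality forces $\|W^*K_0\|=\|WK_0\|=0$, hence $u(0)=0$. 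With $u=C(z-\overline{\alpha})^n$ this gives $\overline{\alpha}^n=0$, so $\alpha=0$. Then $\vartheta(z)=\vartheta'(0)z$ and $u(z)=u^{(n)}(0)z^n/n!$, which is the claimed form up to normalization.

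The main obstacle is the case $n=0$, where $WK_0=u$ need not vanish. There we would instead compare norms on $K_\xi$ for general $\xi$, or apply normality to the polynomials $1,z$ to force $\overline{\alpha}=0$. Concretely, we would expand $\|WK_\xi\|^2=\|WK_\xi\|^2$-type identities in powers of $\xi,\overline{\xi}$ and match the lowest-order coefficients. A secondary concern is carefully justifying the simplification $\overline{\kappa}\,\overline{\alpha}=\alpha$, which is inherited from the convention used in Theorem \ref{1t}.
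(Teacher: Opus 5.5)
Your argument is correct in the paper's setting $n\ge 1$, but it finishes by a different and much more direct route than the paper.

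Both proofs begin with the same reduction via Theorem~\ref{1t}: $u(z)=C(z-\overline{\alpha})^n$ and $\vartheta(z)=\overline{\alpha}+\vartheta'(\overline{\alpha})(z-\overline{\alpha})$ is affine.

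The paper then compares $\|W^n_{u,\vartheta}e_k\|$ with $\|(W^n_{u,\vartheta})^*e_k\|$ for each normalized monomial $e_k$. It writes each as a sum of squared inner products of $z^k$ against $(z-\overline{\alpha})^n\bigl(\overline{\alpha}+(z-\overline{\alpha})\vartheta'(\overline{\alpha})\bigr)^{j-n}$. Without expanding these, it asserts that the norms agree exactly when $u=az^n$ and $\vartheta=cz$. Sufficiency is folded into that assertion, even though equality of such norms on a basis would not by itself yield normality.

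You use only the vector $K_0=1=e_0$. Since $W^n_{u,\vartheta}1=0$ for $n\ge1$, normality forces $(W^n_{u,\vartheta})^*1=\overline{u(0)}K^{(n)}_{\vartheta(0)}=0$, hence $u(0)=0$. The factor $(z-\overline{\alpha})^n$ with $C\neq0$ then gives $\alpha=0$. Sufficiency comes from the diagonal action on monomials, as in Theorem~\ref{5}.

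Your route gives a complete, short proof. It also shows that $u(0)=0$ is necessary for normality of every bounded $W^n_{u,\vartheta}$ with $n\ge1$, independently of complex symmetry. The paper's basis computation would in principle give the whole matrix of the operator, but it is heavier and is not actually carried out there.

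Two of your worries can be dropped.

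First, you do not need $\kappa\alpha=\overline{\alpha}$ for necessity. With $\vartheta(\overline{\alpha})=\overline{\alpha}$, the denominator of $u$ in Theorem~\ref{1t} equals $(1-|\alpha|^2)^{\beta+n+2}\neq0$ at $z=0$, so $u(0)=0$ already forces $\alpha=0$. Then $\vartheta(0)=0$ collapses the formula for $\vartheta$ to $\vartheta'(0)z$.

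Second, $n=0$ is outside the paper's setting. Even there the same vector works once $u$ is constant: $\|W_{u,\vartheta}K_0\|=|u|$ while $\|(W_{u,\vartheta})^*K_0\|=|u|(1-|\vartheta(0)|^2)^{-(\beta+2)/2}$, which forces $\vartheta(0)=0$.
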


\begin{proof}
 Since $W^{n}_{u,\vartheta}$ is complex symmetric with respect to conjugation $JW_{u_ {\alpha},\vartheta_ {\alpha}}$ and $\vartheta(\overline{\alpha})=\overline{\alpha}$, then by Theorem\ref{1t}, we have
 \begin{equation}
     \vartheta(z)= \overline{\alpha}+\vartheta'(\overline{\alpha})(z-\overline{\alpha}) \;\; \text{and}\;\; u(z)=u^{(n)}(\overline{\alpha})(z-\overline{\alpha})^{n}
 \end{equation}
 \begin{align}
     \|(W^{n}_{u,\vartheta})^{*}\vartheta_{k}\|^{2}&= \sum_{j=0}^{n}\Big|{\langle}(W^{n}_{u,\vartheta})^{*}\vartheta_{k}, \vartheta_{j}\rangle\Big|^{2}\notag\\
     &= \sum_{j=0}^{n}\Big|{\langle}\vartheta_{k}, (W^{n}_{u,\vartheta})\vartheta_{j}\rangle\Big|^{2}\notag\\
     &=\sum_{j=0}^{n}\Big|{\langle}\vartheta_{k}, u.\vartheta_{j}^{(n)}(\vartheta)\rangle\Big|^{2}\notag\\
     &=\sum_{j=0}^{n}\Bigg|{\Bigg\langle}\sqrt{\dfrac{\Gamma(k+2+\beta)}{k! \Gamma(2+\beta)}}z^{k}, \sqrt{\dfrac{\Gamma(j+2+\beta)}{j! \Gamma(2+\beta)}} \dfrac{j!}{(j-n)!} u^{(1)}(\overline{\alpha})(z-\overline{\alpha})^{n}\left(\overline{\alpha}+(z-\overline{\alpha})\vartheta'(\overline{\alpha})\right)^{j-n}\Bigg\rangle\Bigg|^{2} 
 \end{align}
and 
\begin{align}
     \|(W^{n}_{u,\vartheta})\vartheta_{k}\|^{2}&= \sum_{j=0}^{n}\Big|{\big\langle}(W^{n}_{u,\vartheta})\vartheta_{k}, \vartheta_{j}\big\rangle\Big|^{2}\notag\\
     &=\sum_{j=0}^{n}\Big|{\Big\langle}u.\vartheta_{k}^{(n)}(\vartheta),\vartheta_{j}\Big\rangle\Big|^{2}\notag\\
     &=\sum_{j=0}^{n}\Bigg|{\Bigg\langle}
     \sqrt{\dfrac{\Gamma(k+2+\beta)}{k! \Gamma(2+\beta)}}\dfrac{k!}{(k-n)!}u^{(1)}(\overline{\alpha})(z-\overline{\alpha})^{n}\left(\overline{\alpha}+(z-\overline{\alpha})\vartheta'(\overline{\alpha})\right)^{k-n},  \sqrt{\dfrac{\Gamma(j+2+\beta)}{j! \Gamma(2+\beta)}}z^{j}\Bigg\rangle\Bigg|^{2} 
 \end{align}

Above calculation implies that 
\begin{equation}
  \|(W^{n}_{u,\vartheta})^{*}\vartheta_{k}\|^{2}= \|(W^{n}_{u,\vartheta})\vartheta_{k}\|^{2}= \sum_{j=0}^{n}\Big|u^{(1)}(0)(\vartheta'(0))^{k-n}\Big|^{2}\left(\dfrac{k!}{(k-n)!}\right)^{2}  
\end{equation}
if and only if $u(z)=  \alpha z^{n}$ and $\vartheta(z)= c$, where $ \alpha = u^{(1)}(0)$ and $c=\vartheta'(0).$
  
\end{proof}
\begin{theorem}\label{4}
Consider $\vartheta \in \mathfrak{S}(\mathbb{D})$ and a non-zero function $u \in \mathcal{H}(\mathbb{D})$ such that the operator $W^{n}_{u,\vartheta}$ is bounded on $\mathcal{A}^{2}_{\beta}$. Assume that $W^{n}_{u,\vartheta}$ is complex symmetric with respect to the conjugation $J W_{u_\alpha, \vartheta_\alpha}$, and that the derivative $\vartheta'( \overline{\alpha} )$ vanishes. Then $W^{n}_{u,\vartheta}$ is normal if and only if the functions $u$ and $\vartheta$ are given by
$$
u(z) =  \alpha z^{n} \quad \text{and} \quad \vartheta(z) = c z,
$$
where $ \alpha = u'(0)$ and $c = \vartheta'(0)$.
\end{theorem}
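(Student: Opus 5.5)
Following the proof of Theorem \ref{3}, I would first feed the extra hypothesis into the explicit forms from Theorem \ref{1t}. With $\vartheta'(\overline{\alpha})=0$, the formula for $\vartheta$ in Theorem \ref{1t} collapses to the constant map $\vartheta(z)=\vartheta(\overline{\alpha})$. The weight then becomes
\[
u(z)=\frac{(1-|\alpha|^{2})^{\beta+n+2}u^{(n)}(\overline{\alpha})(z-\overline{\alpha})^{n}}{\left(1-\alpha\vartheta(\overline{\alpha})+z(\overline{\kappa}\vartheta(\overline{\alpha})-\alpha)\right)^{\beta+n+2}} .
\]
For constant $\vartheta\equiv p$, the operator is $W^{n}_{u,\vartheta}f=f^{(n)}(p)\,u$, which is the rank-one operator $f\mapsto\langle f,K^{(n)}_{p}\rangle u$. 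So the whole question reduces to normality of a rank-one operator $u\otimes K^{(n)}_{p}$.

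For the forward direction I would use the standard fact that $x\otimes y$ is normal exactly when $x$ and $y$ are linearly dependent. To see this, compute $(x\otimes y)(x\otimes y)^{*}=\|y\|^{2}x\otimes x$ and $(x\otimes y)^{*}(x\otimes y)=\|x\|^{2}y\otimes y$; these agree only if $x\in\mathbb{C}y$. Normality therefore forces $u=\lambda K^{(n)}_{p}$, that is,
\[
u(z)=\lambda\prod_{i=2}^{n+1}(\beta+i)\,\frac{z^{n}}{(1-\overline{p}z)^{\beta+n+2}} .
\]

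Next I compare the zero sets. The weight from Theorem \ref{1t} has a zero of order exactly $n$ at $\overline{\alpha}$, because $u^{(n)}(\overline{\alpha})\neq0$. The kernel $K^{(n)}_{p}$ has its zero of order $n$ at $0$. Hence, when $n\ge1$, we get $\overline{\alpha}=0$, so $\alpha=0$. The denominators must also match, which gives $\overline{p}=\overline{\kappa}p$ (with $\alpha=0$). I would then argue that compatibility with the stated form $\vartheta(z)=cz$, $c=\vartheta'(0)$, forces $p=\vartheta(0)=0$. In that case $u(z)=u^{(n)}(0)z^{n}$ up to the normalisation the paper uses, and $\vartheta(z)=0=cz$ with $c=\vartheta'(0)=0$. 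For the converse, $u=a z^{n}$ and $\vartheta\equiv0$ give $W^{n}_{u,\vartheta}f=f^{(n)}(0)\,a z^{n}$, which is a multiple of the rank-one projection onto $z^{n}$ and hence normal. Equivalently, one can check $\|W e_{k}\|=\|W^{*}e_{k}\|$ on the orthonormal basis, as in the proof of Theorem \ref{3}.

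The main obstacle is the step that pins down $p=\vartheta(\overline{\alpha})$. Linear dependence only yields $p=\kappa\overline{p}$, which a priori allows nonzero $p$ on a line through the origin. To rule these out I would try two things in order. First, I would check whether $|\kappa|=1$ together with the exact normalising constants from Theorem \ref{1t} forces $p=0$. If that fails, I would note that the statement implicitly asserts $\vartheta(z)=cz$ with $c=\vartheta'(0)$, so a constant $\vartheta$ must vanish at $0$. I would present this as the place where the conclusion genuinely requires $p=0$, and treat the case $p\neq0$ with $p=\kappa\overline{p}$ as a possible gap in the theorem as stated.
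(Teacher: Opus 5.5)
\textbf{There is a genuine gap, and it cannot be closed: the forward implication of the statement is false.}

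\textbf{What is right.} Your reduction is correct and sharper than the paper's. With $\vartheta'(\overline{\alpha})=0$, Theorem~\ref{1t} makes $\vartheta\equiv p:=\vartheta(\overline{\alpha})$. Hence $W^{n}_{u,\vartheta}=u\otimes K^{(n)}_{p}$, which is normal if and only if $u\in\mathbb{C}K^{(n)}_{p}$. Comparing zeros then forces $\alpha=0$ when $n\ge 1$.

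\textbf{What goes wrong.} First, a sign error. At $\alpha=0$ the weight from Theorem~\ref{1t} is a multiple of $z^{n}(1+\overline{\kappa}pz)^{-(\beta+n+2)}$, while $K^{(n)}_{p}$ is a multiple of $z^{n}(1-\overline{p}z)^{-(\beta+n+2)}$. So the matching condition is $\overline{\kappa}p=-\overline{p}$, not $\overline{p}=\overline{\kappa}p$. Second, the step that should force $p=0$ fails. The normalising constants impose nothing beyond this real-linear condition on $p$. Your fallback, reading off $p=\vartheta(0)=0$ from the form $\vartheta(z)=cz$, assumes the conclusion you are trying to prove.

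\textbf{Counterexample.} Take $\alpha=0$ and $\kappa=-1$. In the form actually used in the proof of Theorem~\ref{1t}, the conjugation is then $f\mapsto\overline{f(\overline{z})}$, i.e.\ just $J$. Let $\vartheta\equiv\frac12$ and $u(z)=z^{n}(1-z/2)^{-(\beta+n+2)}$. Then $u=K/c_n$ with $K=K^{(n)}_{1/2}$ and $c_n=\prod_{i=2}^{n+1}(\beta+i)>0$. So $W^{n}_{u,\vartheta}=c_n^{-1}K\otimes K$ is bounded and positive. Since $K$ has real Taylor coefficients, $JK=K$, hence $JW^{n}_{u,\vartheta}J=W^{n}_{u,\vartheta}=(W^{n}_{u,\vartheta})^{*}$. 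All hypotheses hold and the operator is normal, yet $\vartheta$ is not of the form $cz$, and $u$ is not a monomial.

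\textbf{What your argument actually proves.} For $n\ge1$, under the hypotheses, $W^{n}_{u,\vartheta}$ is normal if and only if $\alpha=0$ and the constant $p=\vartheta(0)$ satisfies $\overline{\kappa}p=-\overline{p}$. In that case $u$ is automatically a nonzero multiple of $K^{(n)}_{p}$. The stated conclusion covers only the case $p=0$.

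\textbf{The paper's proof.} It does not detect this either. After the same reduction to constant $\vartheta$, it compares the norms of $W^{n}_{u,\vartheta}$ and its adjoint on an orthonormal basis, and then asserts the conclusion without justification. Equality of those norms would not suffice for normality anyway.
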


\begin{proof}
 Since $W^{n}_{u,\vartheta}$ is complex symmetric with respect to conjugation $JW_{u_ {\alpha},\vartheta_ {\alpha}}$ and $\vartheta(\overline{\alpha})=\overline{\alpha}$, then by Theorem\ref{1t}, we have
 \begin{equation}
     \vartheta(z)= \vartheta(\overline{\alpha}) \;\; \text{and}\;\; u(z)=\dfrac{(1- |\alpha|^{2})^{\beta+n+2} u^{(n)}(\overline{\alpha})(z-\overline{\alpha})^{n}}{\left(1- \alpha\vartheta(\overline{\alpha})+z(\overline{\kappa}\vartheta(\overline{\alpha})- \alpha)\right)^{\beta +n+2}}
 \end{equation}
 \begin{align}
     \|(W^{n}_{u,\vartheta})^{*}\vartheta_{k}\|^{2}&= \sum_{j=0}^{n}\Big|{\langle}(W^{n}_{u,\vartheta})^{*}\vartheta_{k}, \vartheta_{j}\rangle\Big|^{2}\notag\\
     &= \sum_{j=0}^{n}\Big|{\langle}\vartheta_{k}, (W^{n}_{u,\vartheta})\vartheta_{j}\rangle\Big|^{2}\notag\\
     &=\sum_{j=0}^{n}\Big|{\langle}\vartheta_{k}, u.\vartheta_{j}^{(n)}(\vartheta)\rangle\Big|^{2}\notag\\
     &=\sum_{j=0}^{n}\Bigg|{\Bigg\langle}\sqrt{\dfrac{\Gamma(k+2+\beta}{k! \Gamma(2+\beta)}}z^{k},  
     \dfrac{j!u^{(1)}(\overline{\alpha})(z-\overline{\alpha})^{n}}{(j-n)!(\overline{\alpha}+(z-\overline{\alpha})\vartheta'(\overline{\alpha}))}\Bigg\rangle\Bigg|^{2} 
 \end{align}
and 
\begin{align}
     \|(W^{n}_{u,\vartheta})\vartheta_{k}\|^{2}&= \sum_{j=0}^{n}\Big|{\langle}(W^{n}_{u,\vartheta})\vartheta_{k}, \vartheta_{j}\rangle\Big|^{2}\notag\\
     &=\sum_{j=0}^{n}\Big|{\Big\langle}u.\vartheta_{k}^{(n)}(\vartheta),\vartheta_{j}\Big\rangle\Big|^{2}\notag\\
     &=\sum_{j=0}^{n}\Bigg|{\Bigg\langle}
     u^{(1)}(\overline{\alpha})(z-\overline{\alpha})^{n}\dfrac{k!}{(k-n)!(\overline{\alpha}+(z-\overline{\alpha})\vartheta'(\overline{\alpha})},  \sqrt{\dfrac{\Gamma(j+2+\beta)}{j! \Gamma(2+\beta)}}z^{j}\Bigg\rangle\Bigg|^{2} 
 \end{align}

Above calculation implies that 
\begin{equation}
  \|(W^{n}_{u,\vartheta})^{*}\vartheta_{k}\|^{2}= \|(W^{n}_{u,\vartheta})\vartheta_{k}\|^{2}= \sum_{j=0}^{n}\Big|u^{(1)}(0)(\vartheta'(0))^{k-n}\Big|^{2}\left(\dfrac{k!}{(k-n)!}\right)^{2}  
\end{equation}
if and only if $u(z)=  \alpha z^{n}$ and $\vartheta(z)= cz$, where $ \alpha = u^{(1)}(0)$ and $c=\vartheta'(0).$
  
\end{proof}

\begin{theorem}\label{5}
Let $u(z) =   \alpha z^{n}$ and $\vartheta(z) = c z$, where  $\alpha, c \in \mathbb{C}$ and $n \in \mathbb{N}$. Suppose that the operator $W^{n}_{u,\vartheta}$ is bounded on $\mathcal{A}^2_\beta$. Then the spectrum of $W^{n}_{u,\vartheta}$ consists of the eigenvalues
$$
\{0\} \cup \left\{ \frac{m!}{(m - n)!} \, \alpha\, c^{m - n} : m \geq n \right\}.
$$
Moreover, for each $m \geq n$, the monomial $z^m$ is an eigenvector associated with the eigenvalue $\dfrac{m!}{(m-n)!} \alpha c^{m-n}$.
\end{theorem}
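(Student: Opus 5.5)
The plan is to show that $W^{n}_{u,\vartheta}$ is a diagonal operator with respect to the orthogonal basis of monomials of $\mathcal{A}^{2}_{\beta}$, and then to use the standard description of the spectrum of a bounded diagonal operator. Let $e_m(z)=z^m/\|z^m\|$, $m\ge 0$, be the normalized monomials. These form an orthonormal basis of $\mathcal{A}^{2}_{\beta}$, since the measure $d\mathcal{A}_\beta$ is radial.

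\textbf{Step 1: action on monomials.} For $m<n$ we have $(z^m)^{(n)}=0$, so $W^{n}_{u,\vartheta}z^m=0$. For $m\ge n$,
\[
W^{n}_{u,\vartheta}z^m=\alpha z^n\,\frac{m!}{(m-n)!}(cz)^{m-n}=\frac{m!}{(m-n)!}\,\alpha\,c^{m-n}z^m .
\]
Hence every $z^m$ is an eigenvector. For $m\ge n$ the eigenvalue is $\lambda_m:=\frac{m!}{(m-n)!}\alpha c^{m-n}$, which proves the ``moreover'' part. For $m<n$ the eigenvalue is $0$; since $n\ge1$, this shows $0$ is an eigenvalue, with $z^0=1$ as an eigenvector.

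\textbf{Step 2: reduction to a diagonal operator.} It follows that $W^{n}_{u,\vartheta}$ coincides with the diagonal operator $\mathrm{diag}(0,\dots,0,\lambda_n,\lambda_{n+1},\dots)$ on the basis $\{e_m\}$. Both operators are bounded and agree on polynomials, which are dense in $\mathcal{A}^{2}_{\beta}$, so they are equal. For a bounded diagonal operator $D=\mathrm{diag}(d_m)$, one has $D-\lambda I$ invertible exactly when $\inf_m|d_m-\lambda|>0$, with inverse $\mathrm{diag}((d_m-\lambda)^{-1})$. Therefore $\sigma(D)=\overline{\{d_m\}}$.

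\textbf{Step 3: computing the closure.} This is the only real point to check. Boundedness forces $\sup_m|\lambda_m|<\infty$.
\begin{itemize}
\item If $\alpha=0$ or $c=0$, the set $\{\lambda_m\}$ is finite (at most $\{0,\alpha n!\}$), so it is already closed.
\item If $\alpha\ne0$ and $c\neq 0$, then $|\lambda_m|=\frac{m!}{(m-n)!}|\alpha||c|^{m-n}$ is a polynomial of degree $n$ in $m$ times $|c|^{m-n}$. This is bounded only if $|c|<1$, and in that case $\lambda_m\to0$.
\end{itemize}
In either case the only possible accumulation point of $\{\lambda_m\}$ is $0$, which is already in the set by Step 1. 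So the closure equals $\{0\}\cup\{\lambda_m: m\ge n\}$, and this is the spectrum, consisting entirely of eigenvalues.

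The step I expect to need the most care is Step 3, namely deducing $|c|<1$ from boundedness so that no spurious limit points appear. If $|c|=1$ with $\alpha\ne0$, then $|\lambda_m|\to\infty$, contradicting boundedness, so this case is excluded.
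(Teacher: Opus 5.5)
Your argument is correct, and it takes a different and more complete route than the paper. The paper computes $W^{n}_{u,\vartheta}z^{m}$ and then, to rule out other eigenvalues, takes a putative eigenfunction $f$ and differentiates the eigen-equation repeatedly at $0$. The index $m$ of the first nonvanishing Taylor coefficient of $f$ forces the eigenvalue to be $0$ if $m<n$, and $\frac{m!}{(m-n)!}\alpha c^{m-n}$ if $m\ge n$.

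Both proofs rest on the same fact: the operator acts diagonally on Taylor coefficients. The paper, however, uses this only to describe the point spectrum. It never shows that $W^{n}_{u,\vartheta}-\lambda I$ is invertible for $\lambda$ off the list, and in infinite dimensions that is exactly what is needed to conclude that the spectrum consists of these eigenvalues. The paper's eigen-equation $\kappa f(z)=\alpha c^{n}z^{n}f^{(n)}(cz)$ also carries a spurious factor $c^{n}$. This produces intermediate values such as $n!\alpha c^{n}$ that disagree with the stated list; your direct computation with $f^{(n)}\circ\vartheta$ avoids this.

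By identifying $W^{n}_{u,\vartheta}$ with $\mathrm{diag}(d_m)$ on the orthonormal monomial basis, you obtain $\sigma_p=\{d_m\}$ and $\sigma=\overline{\{d_m\}}$ at once. Your Step 3 then supplies the missing ingredient. Since $|d_m|=\|W^{n}_{u,\vartheta}e_m\|\le\|W^{n}_{u,\vartheta}\|$, boundedness forces $|c|<1$ whenever $\alpha c\ne 0$. This is where $n\ge 1$ is used, besides in $W^{n}_{u,\vartheta}1=0$. Hence the only accumulation point is $0$, which is already an eigenvalue.

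The cost of your approach is just two standard facts: completeness of the monomials in $\mathcal{A}^{2}_{\beta}$, and the spectrum of a bounded diagonal operator. In return you prove the full statement rather than only its eigenvalue part.
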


\begin{proof}
Assume that $u(z) =   \alpha z^{n}$ and $\vartheta(z) = c z$. Then applying the operator $W^n_{u, \vartheta}$ to the monomial $z^m$ yields
\begin{equation}
W^n_{u,\vartheta} z^m =   \alpha z^{n} \cdot \frac{d^n}{dz^n} \left( z^m \circ \vartheta \right) =   \alpha z^{n} \cdot \frac{d^n}{dz^n} (c z)^m = \frac{m!}{(m-n)!} \, \alpha\, c^{m - n} z^m,
\end{equation}
for all integers $m \geq n$. Hence, each $z^m$ with $m \geq n$ is an eigenvector corresponding to the eigenvalue $\dfrac{m!}{(m - n)!}  \alpha c^{m - n}$, and $z^m$ is mapped to a scalar multiple of itself.

To identify all eigenvalues, let $\kappa$ be an eigenvalue of $W^n_{u,\vartheta}$ with a non-zero eigenfunction $f \in \mathcal{A}^{2}_{\beta}$ satisfying
\begin{equation}\label{q1}
\kappa f(z) =  \alpha c^n z^n f^{(n)}(c z).
\end{equation}  
If $f(0) \neq 0$, then plugging $z = 0$ into \eqref{q1} gives $\kappa f(0) = 0$, so $\kappa = 0$.

If $f(0) = 0$ but $f'(0) \neq 0$, differentiate both sides of \eqref{q1}, we get
$$
\kappa f'(z) = n  \alpha c^n z^{n-1} f^{(n)}(c z) +  \alpha c^{n+1} z^n f^{(n+1)}(c z).
$$
Evaluating at $z = 0$ gives $\kappa f'(0) = 0 \Rightarrow \kappa = 0$ again.

Now, $n-$times differentiation of both sides of \eqref{q1} with respect to $z$ gives
\begin{equation}
  \kappa f^{n}(z)=  \alpha c^{n} \Bigg[\displaystyle\sum_{i=0}^{n}c^{i} \binom{n}{i}\frac{n!}{i!} f^{(n+i)}(cz) \Bigg] 
\end{equation}
If $f^{n}(0)\neq 0,$ then $\kappa=n! \alpha c^{n}$. If $f^{n}(0) = 0,$ then $n+1-$times differentiation of both sides of \eqref{q1} with respect to $z$ gives
\begin{equation}
  \kappa f^{n+1}(z)=  \alpha c^{n} \Bigg[\displaystyle\sum_{i=0}^{n+1}c^{i} \binom{n+1}{i}\frac{(n+1)!}{(n+1-i)!} \frac{n!}{i!}f^{(n+i)}(cz) \Bigg] 
\end{equation}

If $f^{n+1}(0)\neq 0,$ then $\kappa=(n+1)! \alpha c^{n+1}$. If $f^{n+1}(0) = 0,$ then $n+2-$times differentiation of both sides of \eqref{q1} with respect to $z$ gives
\begin{equation}
  \kappa f^{n+2}(z)=  \alpha c^{n} \Bigg[\displaystyle\sum_{i=0}^{n+2}c^{i} \binom{n+2}{i}\frac{(n+2)!}{(n+2-i)!} \frac{n!}{i!}f^{(n+i)}(cz) \Bigg] 
\end{equation}
   If $f^{n+1}(0)\neq 0,$ then $\kappa=\frac{(n+2)!}{2!} \alpha c^{n+2}$. If $f^{n+2}(0) = 0,$ then we proceed the same as above and we get the eigenvalues of $W^{n}_{u,\vartheta}$ given by $$\{0\} \bigcup \Bigg\{\dfrac{m!}{(m-n)!} \alpha c^{m-n};\;\;m\ge n \Bigg\}.$$ Therefore, $z^{m}$ is an eigen vector of $W^{n}_{u,\vartheta}$ with respect to the eigenvalue $$\Bigg\{\dfrac{m!}{(m-n)!} \alpha c^{m-n};\;\; m\ge n \Bigg\}.$$
   
\end{proof}

\section{conflict of interest}
There is no conflict of interest.

\end{document}